\theoremstyle{plain}
\newtheorem{theorem}{Theorem}
\newtheorem{corollary}{Corollary}
\newtheorem{lemma}{Lemma}
\theoremstyle{definition}
\newtheorem{definition}{Definition}
\begin{document}
\title[Algebraically defined Costas arrays]
{Parity properties of Costas arrays\\
defined via finite fields}
%\author{Konstantinos Drakakis${}^1$, Rod Gow${}^2$ and Scott Rickard${}^3$}

\author[Konstantinos Drakakis]{Konstantinos Drakakis${}^1$}
\thanks{${}^1$ School of Electrical, Electronic and Mechanical Engineering,
University College Dublin,
Belfield, Dublin 4,
Ireland,
\emph{E-mail address:} \texttt{konstantinos.drakakis@ucd.ie}}

\author[Rod Gow]{Rod Gow${}^2$}
\thanks{${}^2$ School of Mathematical Sciences,
University College Dublin,
Belfield, Dublin 4,
Ireland,
\emph{E-mail address:} \texttt{rod.gow@ucd.ie}}

\author[Scott Rickard]{Scott Rickard${}^3$}
\thanks{${}^3$ School of Electrical, Electronic and Mechanical Engineering,
University College Dublin,
Belfield, Dublin 4,
Ireland,
\emph{E-mail address:} \texttt{scott.rickard@ucd.ie}}

\thanks{All authors are also affiliated with the Claude Shannon Institute (www.shannoninstitute.ie).}

\thanks{This work is a unified and expanded version of \cite{D2,G}.}

\keywords{}
\subjclass{}
\begin{abstract} A Costas array of order $n$ is an arrangement of dots and blanks into $n$ rows
and $n$ columns, with exactly one dot in each row and each column, the arrangement satisfying certain specified conditions.
A dot occurring in such an array is even/even if it occurs in the $i$-th row and $j$-th column, where $i$ and $j$ are both even
integers, and there are similar definitions of odd/odd, even/odd and odd/even dots. Two types of
Costas arrays, known as Golomb-Costas and Welch-Costas arrays, can be defined using finite fields.
When $q$ is a power of an odd prime, we enumerate the number of even/even
odd/odd, even/odd and odd/even dots in a  Golomb-Costas array.
We show that three of these numbers are equal and they differ by $\pm 1$ from the fourth.
For a Welch-Costas array of order $p-1$, where $p$ is an odd prime, the four numbers above
are all equal to $(p-1)/4$ when $p\equiv 1\pmod{4}$, but when $p\equiv 3\pmod{4}$, we show that
the four numbers are defined in terms of the class number of the imaginary
quadratic field $\mathbb{Q}(\sqrt{-p})$, and thus behave in a much less predictable manner.
\end{abstract}
\maketitle
\section{Introduction}
\noindent

\noindent A \textit{Costas array}  of order $n$
is an $n\times n$ array of dots and blanks satisfying the two conditions:
\smallskip
\begin{itemize}

\item  there are $n$ dots in the array, one in each row and in each column;
\item no two of the ${n\choose 2}$ line segments joining two dots in the array have the same
length and slope.
\end{itemize}
\smallskip

We shall call an $n\times n$ array of dots and blanks satisfying the first condition a
\textit{ permutation array}, since it corresponds to the well known concept of a permutation matrix.
A Costas array of order $n$ determines a special type of
permutation, $\pi$, say,  of the numbers $\{\,1,2, \ldots, n\,\}$. Specifically, if we have a dot in the $(i,j)$ position of the array, we set
 \[
 \pi(i)=j.
\]
 The second of the Costas array conditions then implies that
 \[
 \pi(i+k)-\pi(i)\ne \pi(j+k)-\pi(j)
 \]
 whenever $i\ne j$ and $1\le i<i+k\le n$,  $1\le j<j+k\le n$.

Costas arrays were introduced by the electrical engineer John Costas in  the 1960's. They create
ideal waveforms for certain sensor applications, reducing ambiguity in interpreting radar and sonar
returns. See, for example, \cite{Gol}, pp.188-190, and \cite{D}.
For an arbitrary positive integer $n$, we do not know any systematic ways of
constructing Costas arrays of order $n$, and it is an open question whether Costas arrays
of order $n$ actually exist for all
integers $n\ge 1$. Furthermore, in those cases where Costas arrays of order $n$ are known to exist,  they seem to form a very small fraction of the total number of permutation arrays of the same order. Thus, while there are clearly $26!$ permutation arrays of order 26, exhaustive
computer searching has shown that only 56 of them are Costas arrays. We have
a very explicit condition for  a permutation to define a Costas array, but in practice, it is a condition which is difficult to exploit in terms of traditional algebraic properties of permutations, and no simple criteria
have emerged which enable us to distinguish Costas permutations from ordinary permutations.

In the search for Costas arrays, it is reasonable to ask whether, from a given Costas array of order $n$, we can construct a subarray (or superarray) which has the Costas property. For example if the Costas array  has a dot at the $(1,1)$ position (a so-called corner dot), we can then remove the first row and column
to obtain a Costas array of order $n-1$. This procedure, and similar variants, has frequently been employed to obtain Costas arrays of smaller size. Somewhat harder to achieve, is to add a corner dot
to create a Costas array of order $n+1$. An example of this procedure is the creation of a Costas array
of order 31 from one of order 30.

The motivation of the present paper, which relates to constructing
Costas arrays from larger such arrays, is described by the following simple lemma.

\begin{lemma} \label{motlem} Let $C$ be a Costas array of order $n$.
Let $\Delta$ be the subset of ordered pairs $(i,j)$, where there is a dot in the $(i,j)$-position
of $C$. Suppose that there is an integer $r>1$ such that
whenever $(i,j)$ is in $\Delta$ and $r$ divides $i$, then $r$ also divides $j$. Write $n=rm+d$, where
$m$ and $d$ are integers and $0\le d<r$. Then we may construct a Costas array of order $m$ by placing
a dot in the $(u,v)$ position, where $v$ is defined as the unique positive integer such that
$(ru,rv)\in\Delta$.
\end{lemma}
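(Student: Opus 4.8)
The plan is to recognize that $\Delta$ is precisely the graph of the Costas permutation $\pi$, so that the hypothesis reads ``$r\mid i$ implies $r\mid\pi(i)$'', and then to show that the rescaled assignment $u\mapsto v$ is itself a Costas permutation of $\{1,\ldots,m\}$. Concretely, I would define $\sigma\colon\{1,\ldots,m\}\to\{1,\ldots,m\}$ by $\sigma(u)=v$, where $v$ is the integer from the statement, and verify in turn that $\sigma$ is well-defined, that it is a permutation, and that it satisfies the Costas difference condition.

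First I would check that the rule is well-defined with the correct codomain. For $1\le u\le m$ the row index $ru$ satisfies $ru\le rm\le n$, so $(ru,\pi(ru))\in\Delta$; since $r\mid ru$, the hypothesis gives $r\mid\pi(ru)$, and hence $\pi(ru)=rv$ for a unique positive integer $v$. The one point needing care is the bound $v\le m$: from $rv=\pi(ru)\le n=rm+d$ with $0\le d<r$ we get $rv<r(m+1)$, and since $rv$ is a multiple of $r$ this forces $rv\le rm$, i.e.\ $v\le m$. Next, if $\sigma(u_1)=\sigma(u_2)=v$ then $\pi(ru_1)=rv=\pi(ru_2)$, so injectivity of $\pi$ gives $u_1=u_2$; an injective self-map of a finite set is a bijection, so $\sigma$ permutes $\{1,\ldots,m\}$ and the construction places exactly one dot in each row and each column of the order-$m$ array.

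The heart of the matter is the Costas difference condition for $\sigma$, and this is the step I expect to be the main obstacle, although it really amounts to tracking how differences rescale. Suppose, for a contradiction, that $\sigma(u_1+k)-\sigma(u_1)=\sigma(u_2+k)-\sigma(u_2)$ for some $u_1\ne u_2$ with $1\le u_1<u_1+k\le m$ and $1\le u_2<u_2+k\le m$. Since $\sigma(u)=\pi(ru)/r$ by construction, multiplying through by $r$ yields
\[
\pi(r(u_1+k))-\pi(ru_1)=\pi(r(u_2+k))-\pi(ru_2).
\]
Setting $i=ru_1$ and $j=ru_2$ (so $i\ne j$), and noting that the common step is $rk$ with $1\le ru_1<ru_1+rk\le rm\le n$ and likewise for $ru_2$, this is exactly a violation of the Costas condition for $\pi$ at step $rk$. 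That contradiction shows that no two line segments in the new array share the same length and slope, so the order-$m$ array is a Costas array, completing the proof.
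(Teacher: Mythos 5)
Your proposal is correct and follows essentially the same route as the paper's own proof: define the rescaled map $u\mapsto \pi(ru)/r$ and transfer any equality of differences in the small array up to the original array via multiplication by $r$, where the Costas property of $\pi$ forces the rows to coincide. The only difference is that you also verify the codomain bound $v\le m$ and the bijectivity of the rescaled map, details the paper takes for granted; this extra care is harmless and slightly strengthens the write-up.
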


\begin{proof} We let $\Omega $ denote the set of integers lying between 1 and $n$, and
$\Omega'$ denote the subset of integers lying between 1 and $m$.
Let $\sigma$ be the permutation of $\Omega$ corresponding to $C$.
We define
a permutation $\sigma'$ of $\Omega'$ by setting
\[
\sigma'(u)=\frac{\sigma(ru)}{r}
\]
We claim that $\sigma'$ has the Costas property. For suppose that
 \[
 \sigma'(u+w)-\sigma'(u)= \sigma'(v+w)-\sigma'(v)
 \]
 where $1\le u<u+w\le m$,  $1\le v<v+w\le m$. It follows then by definition that
 \[
 \sigma(ru+rw)-\sigma(ru)= \sigma(rv+rw)-\sigma(rv)
 \]
 But $r\le ru<ru+rw\le rm\le n$,  $r\le rv<rv+rw\le rm\le n$, and since $\sigma$ has the Costas property,
 it follows that $ru=rv$. Hence $u=v$, which implies that $\sigma'$ also has the Costas property.
 \end{proof}

In the simplest case of Lemma \ref{motlem}, namely when $r=2$, we could (in principle) construct two smaller Costas arrays, one by retaining the dots whose coordinates are both even, and the other by retaining the dots whose coordinates are both odd. However, such constructions are impossible, as the following theorem indicates \cite{DRG,FL}:

\begin{theorem} \label{lev}
In a Costas array of order $n>2$, at least one dot has one even and one odd coordinate.
\end{theorem}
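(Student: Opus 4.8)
The plan is to argue by contradiction. Suppose that no dot has mixed-parity coordinates; then the associated permutation $\pi$ satisfies $\pi(i)\equiv i\pmod 2$ for every $i$, so $\pi$ restricts to a permutation of the odd labels and to a permutation of the even labels. The first thing I would extract is the behaviour of the consecutive differences $d_i=\pi(i+1)-\pi(i)$ for $1\le i\le n-1$. Since $i$ and $i+1$ have opposite parities, each $d_i$ is odd; and by the Costas property applied with $k=1$ the $d_i$ are pairwise distinct. Thus the $d_i$ form a collection of $n-1$ distinct odd integers lying in the interval $[-(n-1),\,n-1]$, and their sum telescopes to $\pi(n)-\pi(1)$.

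When $n$ is odd, $n-1$ is even, so $[-(n-1),\,n-1]$ contains exactly $n-1$ odd integers, namely $\pm1,\pm3,\dots,\pm(n-2)$. Hence the $d_i$ must be precisely these, and being symmetric about $0$ they sum to $0$. Telescoping then forces $\pi(n)=\pi(1)$, contradicting the injectivity of $\pi$ since $n>1$. This disposes of all odd $n>2$ cleanly, without any appeal to Lemma \ref{motlem}.

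For even $n$ the counting falls one short: the interval now contains $n$ odd integers $\pm1,\dots,\pm(n-1)$, so the $d_i$ omit exactly one of them, necessarily $\pi(1)-\pi(n)$, and the telescoping argument alone does not close. This is where the real difficulty lies. My plan is to introduce a second family, the gap-$2$ differences $\pi(i+2)-\pi(i)$: these are even (the two indices share parity) and, by the Costas property with $k=2$, distinct; there are $n-2$ of them and exactly $n-2$ admissible nonzero even values $\pm2,\dots,\pm(n-2)$, so they are forced to exhaust this set and hence sum to $0$, yielding the rigid relation $\pi(1)+\pi(2)=\pi(n-1)+\pi(n)$. I would then try to play this forced even-gap relation against the almost-forced odd-gap data to produce a clash. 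Failing a direct contradiction, the fallback is a descent via Lemma \ref{motlem}: the hypothesis makes both the even/even and the odd/odd subarrays into Costas arrays of order $n/2$, reducing the order and terminating at the base cases $n=3,4$, which are checked by hand. The genuine obstacle to this second route is that the subarrays need not themselves preserve parity, so the inductive hypothesis does not transfer verbatim; reconciling the parity bookkeeping across a single reduction step, or else replacing it entirely by the combined gap-$1$/gap-$2$ counting above, is the crux of the even case.
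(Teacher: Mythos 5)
First, a point of comparison: the paper does not prove Theorem \ref{lev} at all --- it quotes the result from \cite{DRG,FL} --- so there is no internal proof to measure your attempt against, and it must stand on its own. Your odd case does stand, and is clean: under the hypothesis that every dot has matched-parity coordinates, the gap-$1$ differences $d_i=\pi(i+1)-\pi(i)$ are odd and pairwise distinct (the Costas condition with $k=1$), and when $n$ is odd the interval $[-(n-1),n-1]$ contains exactly $n-1$ odd integers, namely $\pm 1,\pm 3,\dots,\pm(n-2)$; so the $d_i$ exhaust this symmetric set, their sum is $0$, and telescoping forces $\pi(n)=\pi(1)$, contradicting injectivity. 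That half is complete and correct.

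The even case, however, is a genuine gap, and you say so yourself. What you actually derive there are two facts: the single missing odd difference equals $\pi(1)-\pi(n)$, and the gap-$2$ count forces $\pi(1)+\pi(2)=\pi(n-1)+\pi(n)$. These two relations are mutually consistent as they stand, so no contradiction follows from them; ``playing them against each other'' is a hope, not an argument, and whether the lag-$1$/lag-$2$ information alone suffices for all even $n$ is left unestablished (small cases like $n=4$ or $n=6$ can be checked by hand, but that is not a proof for general even $n$). Your fallback via Lemma \ref{motlem} cannot be repaired in the form stated, for exactly the reason you identify: applying the inductive hypothesis to the order-$n/2$ even/even subarray produces a mixed-parity dot of the \emph{subarray}, say at position $(u,v)$ with $u+v$ odd, but this corresponds to the dot $(2u,2v)$ of the original array, whose coordinates are both even --- so it contradicts nothing about the original array, and the induction does not close. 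Since even $n$ is half the theorem --- indeed the harder half, which is precisely the substance of the cited references \cite{DRG,FL} --- the proposal as written proves the statement only for odd $n>2$.
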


We will provide examples  of larger values of $r$ for which the hypothesis of Lemma \ref{motlem} is fulfilled, although we must admit that the corresponding subarray derived is already known.

% \begin{lemma} Let $C$ be a Costas array of order $n$.
%Let $\Delta$ be the subset of ordered pairs $(i,j)$, where there is a dot in the $(i,j)$-position
%of $C$. Suppose that
%whenever $(i,j)$ is in $\Delta$ and $i$ is even, then $j$ is also even. Write $n=2m+d$, where
%$m$ and $d$ are integers and $0\le d\le 1$. Then we may construct a Costas array of order $m+d$ by placing
%a dot in the $(u,v)$ position, where $v$ is defined as the unique positive integer such that
%$(2u-1,2v-1)\in\Delta$.
%\end{lemma}
%
%\begin{proof} As before, we let $\Omega $ denote the set of integers lying between 1 and $n$, and
%$\Omega'$ denote the subset of integers lying between 1 and $m+d$.
%Let $\sigma$ be the permutation of $\Omega$ corresponding to $C$. We note that as $\sigma$ maps
%even integers into even integers, it must also map odd integers into odd integers.
%We define
%a permutation $\tau'$ of $\Omega'$ by setting
%\[
%\tau'(u)=\frac{\sigma(2u-1)+1}{2}.
%\]
%The proof that $\tau'$ has the Costas property is elementary.
% \end{proof}

 %We have used the phrase \textit{in principle} above, since, as we shall describe in more detail
  %later, there do not seem to be any Costas arrays satisfying the hypothesis of Lemma 2. We have
  %not, however, been able to prove the non-occurrence of such arrays in all cases.

    %Furthermore, when the hypotheses of Lemma \ref{motlem} obtain with $r$ greater than 2, there does not seem to be an obvious way to obtain
  %a second Costas array by analogy with the simple construction described in  Lemma 2.

 Having now discussed our general intentions for investigating Costas arrays, we proceed to
 describe and investigate the field-theoretic constructions of a special class of these arrays.

 \section{Properties of Costas arrays constructed by field-theoretic methods }
\noindent
For certain special values of $n$, finite fields provide a framework to produce Costas arrays of order $n$,
and we intend to show in this paper how the arrays so produced relate to the phenomena described in
Lemmas 1 and 2.  We begin by  explaining the field-theoretic constructions, as presented in \cite{GolTay}.

Let $p$ be a prime integer and let $q=p^f$, where $f$ is a positive integer. Let
$\mathbb{F}_q$ denote the finite field of size $q$ and let $\mathbb{F}_q^*$ denote the multiplicative
group of non-zero elements in $\mathbb{F}_q$.  Since $\mathbb{F}_q^*$ is a cyclic
group, we may take any two generators $\alpha$ and $\beta$  of it (we allow the
possibility that $\alpha=\beta$) and construct a corresponding permutation array
of order $q-2$ by putting a dot in the position $(i,j)$, where $1\le i, j\le q-2$, whenever
\[
\alpha^i+\beta^j=1.
\]
It is well known that such an array has the Costas property. We shall call an array constructed by this
procedure a \textit{Golomb-Costas array} of order $q-2$.

Suppose that the integer $f$ can be factorized as a product $f=de$, where $d$ and $e$ are positive integers. Write $Q=p^d$. Then $q=Q^e$
and $\mathbb{F}_Q$ is a subfield
of index $e$ in $\mathbb{F}_q$. We set
\[
r=\frac{Q^e-1}{Q-1}.
\]
Let $\alpha$ be a generator of $\mathbb{F}_q^*$. It is straightforward
to prove that $\alpha^i\in \mathbb{F}_Q$ if and only if $r$ divides $i$.

\begin{lemma} \label{divprop} Let $q=Q^e$, where $Q=p^d$ is a power of a prime $p$, and let $\alpha$ and $\beta$ be generators of $\mathbb{F}_q^*$.
Let
\[
r=\frac{Q^e-1}{Q-1}.
\]
Suppose that for some integers $i$ and $j$ we have
\[
\alpha^i+\beta^j=1.
\]
Then $r$ divides $i$ if and only if $r$ divides $j$.
\end{lemma}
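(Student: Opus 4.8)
The plan is to exploit the fact that $\mathbb{F}_Q$ is a subfield of $\mathbb{F}_q$, hence closed under subtraction and containing $1$, together with the membership criterion recorded just before the statement. The whole argument is essentially one observation dressed up symmetrically in $\alpha$ and $\beta$.

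First I would state the criterion in the exact form needed: for \emph{any} generator $\gamma$ of $\mathbb{F}_q^*$ and any integer $k$, the power $\gamma^k$ (which is automatically nonzero) lies in $\mathbb{F}_Q$ if and only if $r$ divides $k$. The excerpt proves this for $\alpha$, but since $\beta$ is also a generator of $\mathbb{F}_q^*$, the identical argument applies to $\beta$. The content here is that $\mathbb{F}_Q^*$ is the unique subgroup of order $Q-1=(q-1)/r$ in the cyclic group $\mathbb{F}_q^*$, so that $\gamma^k\in\mathbb{F}_Q^*$ precisely when $\gamma^{k(Q-1)}=1$, i.e.\ when $q-1=r(Q-1)$ divides $k(Q-1)$, which is equivalent to $r\mid k$.

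With this in hand the lemma follows in two symmetric lines. Suppose $r\mid i$. Then $\alpha^i\in\mathbb{F}_Q$ by the criterion, and since $1\in\mathbb{F}_Q$ and $\mathbb{F}_Q$ is closed under subtraction, $\beta^j=1-\alpha^i\in\mathbb{F}_Q$. As $\beta^j$ is a power of a nonzero element it is nonzero, so the criterion applied to the generator $\beta$ gives $r\mid j$. Interchanging the roles of $\alpha^i$ and $\beta^j$ (and of $i$ and $j$) yields the reverse implication, so $r\mid i$ if and only if $r\mid j$.

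There is no genuine obstacle here; the only points requiring a moment's care are that the membership criterion is phrased for nonzero elements, so one should note explicitly that $\alpha^i$ and $\beta^j$ never vanish (being powers of units), which is exactly what permits the passage between membership in $\mathbb{F}_Q$ and in $\mathbb{F}_Q^*$, and that the criterion must be invoked for both generators $\alpha$ and $\beta$, not merely for the one treated in the text.
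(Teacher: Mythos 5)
Your proof is correct and follows essentially the same route as the paper: the paper's (very terse) proof also combines the membership criterion $\alpha^i\in\mathbb{F}_Q \Leftrightarrow r\mid i$ with the observation that, since $\mathbb{F}_Q$ is a subfield containing $1$, the relation $\alpha^i+\beta^j=1$ forces $\alpha^i\in\mathbb{F}_Q$ if and only if $\beta^j\in\mathbb{F}_Q$. Your version simply makes explicit the details (applying the criterion to both generators, nonvanishing of the powers) that the paper leaves implicit.
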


\begin{proof}
This follows from our observation above describing when $\alpha^i$ is in $\mathbb{F}_Q$ and
also from the obvious
fact that, under our given hypothesis on $\alpha$ and $\beta$, $\alpha^i\in \mathbb{F}_Q$ if and only if
$\beta^j\in \mathbb{F}_Q$.
\end{proof}

Lemma  \ref{divprop} implies that if $q=p^f$, where $f$ is an integer greater than 1,  a Golomb-Costas
array of order $q-2$ always satifies the hypotheses of Lemma \ref{motlem}. The Costas array constructed
by the lemma is simply a Golomb-Costas array of order $p^e-2$, where $e$ is any positive integer
divisor of $f$. Thus the subarray is not new.  (We note that the case when $q=4$ leads to a vacuous
substructure.)

For each odd prime $p$, there is also
a construction, due to  L. R. Welch, of a Costas array of order $p-1$. We  define it as a permutation
on the set $\Omega=\{\,1,2, \ldots, p-1\,\}$ as follows. Let $\alpha$, $c$ be any
generator,  any element of  $\mathbb{F}_p^*$, respectively, and define
\[
\sigma=
\sigma_{\alpha, c}: \Omega\to\Omega
\]
by $\sigma(i)=j$, where $j$ is that element of $\Omega$ satisfying
\[
c\alpha^i\equiv j\pmod {p}.
\]
It is straightforward to see that all these $\sigma_{\alpha, c}$ are different, and hence there are
\[
(p-1)\phi(p-1)
\]
such permutations. Furthermore, each permutation $\sigma_{\alpha, c}$ has the Costas property
and thus we obtain $(p-1)\phi(p-1)$  corresponding Costas arrays. We will call these
\textit{Welch-Costas arrays}. It is not at all easy to see whether any Welch-Costas array has the property
described in Lemma \ref{motlem}, for a particular value of $r$.

\section{Parity properties of the dots of a Costas array}

\noindent We introduce here a simple concept to generalize the idea implicit in Theorem \ref{lev}.
We say that a dot occurring in a permutation array is
even/even if it occurs in the $i$-th row and $j$-th column, where $i$ and $j$ are both even
integers. We may likewise define odd/odd, even/odd and odd/even dots. In this connection, the following notation is convenient to use.

\begin{definition}  Let $P$ be a permutation array of order $n$. We let
\[
\#(e,e),\quad \#(o,o),\quad \#(e,o),\quad \#(o,e)
\]
denote the number of
even/even, odd/odd, even/odd and odd/even dots in $P$ respectively
\end{definition}

Note that we shall show in Lemma \ref{parpop} that $\#(e,o)=\#(o,e)$. In the light of the notation just introduced, all that Theorem \ref{lev} states is that, for any Costas array of order greater than 2, $ \#(o,e)>0$.

All Costas arrays of order $n$, where $n\le 26$, are known; there are 143, 635 of them. For the convenience of the reader, we tabulate statistical information about the location of the dots in Costas
arrays in terms of the numbers $\#(e,e)$, etc. Table \ref{stat} displays  this information for order $n\le 15$, 15 being the highest order we know for which Costas arrays with $\#(o,e)=1$ exist. We are inclined to speculate that, as $n\rightarrow \infty$, $\#(e,o)\rightarrow \infty$ for Costas arrays of order $n$.

\begin{table}
\[
\begin{array}{|r|rrrr|r|}\hline
n & \#(e,e) & \#(o,o) & \#(e,o) & \#(o,e) & \text{\# of arrays}\\ \hline
2 & 0 & 0 & 1 & 1 & 1\\
   & 1 & 1 & 0 & 0 & 1\\ \hline
3 & 0 & 1 & 1 & 1 & 4\\ \hline
4 & 1 & 1 & 1 & 1 & 12\\ \hline
5 & 0 & 1 & 2 & 2 & 16\\
   & 1 & 2 & 1 & 1 & 24\\ \hline
6 & 1 & 1 & 2 & 2 & 58\\
   & 2 & 2 & 1 & 1 & 58\\ \hline
7 & 0 & 1 & 3 & 3 & 20\\
   & 1 & 2 & 2 & 2 & 68\\
   & 2 & 3 & 1 & 1 & 112\\ \hline
8 & 1 & 1 & 3 & 3 & 138\\
   & 2 & 2 & 2 & 2 & 168\\
   & 3 & 3 & 1 & 1 & 138\\ \hline
9 & 0 & 1 & 4 & 4 & 16\\
   & 1 & 2 & 3 & 3 & 320\\
   & 2 & 3 & 2 & 2 & 212\\
   & 3 & 4 & 1 & 1 & 212\\ \hline
10 & 1 & 1 & 4 & 4 & 304\\
      & 2 & 2 & 3 & 3 & 776\\
      & 3 & 3 & 2 & 2 & 776\\
      & 4 & 4 & 1 & 1 & 304\\ \hline
11 & 1 & 2 & 4 & 4 & 892\\
     & 2 & 3 & 3 & 3 & 1880\\
     & 3 & 4 & 2 & 2 & 1476\\
     & 4 & 5 & 1 & 1 & 120\\ \hline
12 & 1 & 1 & 5 & 5 & 28\\
     & 2 & 2 & 4 & 4 & 2242\\
     & 3 & 3 & 3 & 3 & 3312\\
     & 4 & 4 & 2 & 2 & 2242\\
     & 5 & 5 & 1 & 1 & 28\\ \hline
13 & 1 & 2 & 5 & 5 & 552\\
      & 2 & 3 & 4 & 4 & 4924\\
      & 3 & 4 & 3 & 3 & 4388\\
      & 4 & 5 & 2 & 2 & 2964\\ \hline
14 & 1 & 1 & 6 & 6 & 4\\
     & 2 & 2 & 5 & 5 & 2512\\
     & 3 & 3 & 4 & 4 & 6110\\
     & 4 & 4 & 3 & 3 & 6110\\
     & 5 & 5 & 2 & 2 & 2512\\
     & 6 & 6 & 1 & 1 & 4\\ \hline
15 & 1 & 2 & 6 & 6 & 48\\
      & 2 & 3 & 5 & 5 & 4840\\
      & 3 & 4 & 4 & 4 & 7412\\
      & 4 & 5 & 3 & 3 & 6016\\
      & 5 & 6 & 2 & 2 & 1288\\
      & 6 & 7 & 1 & 1 & 8 \\ \hline
\end{array}
\]
\caption{\label{stat} Parity populations of dots in Costas arrays of order $n\leq 15$}
\end{table}

Some of the data in the table shows a regularity which can be explained in theoretical terms. Thus,
for example, rotation of a Costas array of order $n$ clockwise through $90^{\circ}$ produces another, different
Costas array, and a dot at position $(i,j)$ is transformed to a dot at position $(j,n+1-i)$. It follows that if $n$ is even, an even/even dot is transformed to an even/odd dot, and an odd/
odd dot is transformed to an odd/even dot. This explains a symmetry in the data when $n$ is even.

\begin{lemma}\label{parpop}  Let $P$ be a permutation array of order $n$. Then we have
\[
 \#(e,o)= \#(o,e)
\]
and
\[
\begin{cases}
\#(o,o)=\#(e,e)+1, &\text{if $n$ is odd;}\\
\#(o,o)=\#(e,e), &\text{if $n$ is even.}
\end{cases}
\]
\end{lemma}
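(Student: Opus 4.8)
The plan is to prove both assertions by a pure counting argument that uses nothing about the Costas property, only the defining feature of a permutation array: exactly one dot in each row and each column. First I would record the parity census of the index set $\{1,2,\ldots,n\}$. When $n$ is even there are $n/2$ even indices and $n/2$ odd indices, whereas when $n$ is odd there are $(n-1)/2$ even indices and $(n+1)/2$ odd indices. I will write $E$ and $O$ for the number of even and odd indices respectively, so that $O-E=0$ when $n$ is even and $O-E=1$ when $n$ is odd.

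Next I would classify the dots by the parity of their row index. Since each row contains exactly one dot, the dots lying in even-numbered rows number exactly $E$, and those in odd-numbered rows number exactly $O$; splitting each of these according to column parity gives
\[
\#(e,e)+\#(e,o)=E,\qquad \#(o,o)+\#(o,e)=O.
\]
Performing the symmetric classification by column parity, and again using that each column contains exactly one dot, yields
\[
\#(e,e)+\#(o,e)=E,\qquad \#(o,o)+\#(e,o)=O.
\]

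The first identity of the lemma now falls out immediately: the two equations whose right-hand side is $E$ give $\#(e,e)+\#(e,o)=\#(e,e)+\#(o,e)$, hence $\#(e,o)=\#(o,e)$. For the second identity I would subtract the odd-row equation from the even-row equation to obtain $\#(e,e)-\#(o,o)+\#(e,o)-\#(o,e)=E-O$; substituting $\#(e,o)=\#(o,e)$ collapses this to $\#(o,o)-\#(e,e)=O-E$. Inserting the parity census from the first step gives $\#(o,o)=\#(e,e)$ when $n$ is even and $\#(o,o)=\#(e,e)+1$ when $n$ is odd, as required. I do not anticipate any genuine obstacle here; the argument is elementary, and the only point demanding care is the clean separation of the two parities of $n$ in the index count, since that is precisely what distinguishes the two cases of the conclusion.
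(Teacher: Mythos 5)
Your proposal is correct and is essentially the paper's own proof: both count dots by row parity and by column parity (using that each row and column holds exactly one dot), derive $\#(e,o)=\#(o,e)$ from the two even counts, and obtain $\#(o,o)-\#(e,e)=|\mathcal{O}|-|\mathcal{E}|$ from the odd counts. No gap; the argument matches the paper's in substance and structure.
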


\begin{proof} Let $\Delta$ be the subset of ordered pairs $(i,j)$, where there is a dot in the position $(i,j)$
of $P$. Let $\mathcal{E}$ denote the subset of even integers lying between $1$ and $n$, and $\mathcal{O}$ the subset
of odd integers in the same interval. There are $|\mathcal{E}|$ elements $(i,j)$ in $\Delta$ with $i\in \mathcal{E}$ and, by
considering whether the second component $j$ is even or odd, we see that
\[
|\mathcal{E}|=\#(e,e)+\#(e,o).
\]
Equally, there are $|\mathcal{E}|$ elements $(k,l)$ in $\Delta$ with $l\in \mathcal{E}$ and the same argument shows that
\[
|\mathcal{E}|=\#(e,e)+\#(o,e).
\]
It follows that $\#(e,o)=\#(o,e)$. It is also clear that
\[
|\mathcal{O}|=\#(o,o)+\#(o,e).
\]
Since $|\mathcal{E}|=|\mathcal{O}|-1$ if $n$ is odd, and $|\mathcal{E}|=|\mathcal{O}|$ if $n$ is even, the equations relating $\#(e,e)$ and $\#(o,o)$ in the
two cases are immediate.
\end{proof}

We turn now to the question of enumerating the even/even dots in a Golomb-Costas array.

\begin{lemma} \label{eesq} Let $C$ be a Golomb-Costas array of order $q-2$, where $q$ is a power of an odd prime.
Then the number of even/even dots in
$C$ equals the number
of non-identity elements $z$ in $\mathbb{F}_q^*$ such that $z$ and $1-z$ are squares.
\end{lemma}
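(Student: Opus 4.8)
The plan is to convert the combinatorial count of even/even dots into the algebraic count in the statement by reading the defining relation $\alpha^i+\beta^j=1$ through its exponents, and then translating ``even exponent'' into ``square''. First I would set $z=\alpha^i$, so that the presence of a dot at $(i,j)$ becomes the pair of conditions $z=\alpha^i$ and $1-z=\beta^j$. Since $\alpha$ is a generator of $\mathbb{F}_q^*$, as $i$ runs over $\{1,\dots,q-2\}$ the element $z=\alpha^i$ runs bijectively over $\mathbb{F}_q^*\setminus\{1\}$ (no exponent in this range is a multiple of $q-1$, so $z\ne 1$). Given such a $z$, the element $1-z$ is nonzero because $z\ne 1$ and is different from $1$ because $z\ne 0$, so it is $\beta^j$ for a unique $j\in\{1,\dots,q-2\}$, as $\beta$ is also a generator. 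Thus the dots of $C$ are in bijection with the elements $z\in\mathbb{F}_q^*\setminus\{1\}$.

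The key algebraic input is that, because $q$ is odd, the order $q-1$ of $\mathbb{F}_q^*$ is even, so the set of nonzero squares is precisely the index-two subgroup of even powers of any generator. In particular $\alpha^i$ is a square if and only if $i$ is even, and likewise $\beta^j$ is a square if and only if $j$ is even; here the parity of the exponent is meaningful precisely because exponents are read modulo the even number $q-1$, so $i\in\{1,\dots,q-2\}$ is literally the discrete logarithm and its parity matches that of the logarithm. Consequently the dot attached to $z$ is even/even exactly when $z=\alpha^i$ is a square and $1-z=\beta^j$ is a square.

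Combining the two observations, $\#(e,e)$ equals the number of $z\in\mathbb{F}_q^*\setminus\{1\}$ for which both $z$ and $1-z$ are squares, which is the assertion. The argument needs no deep idea; the only points demanding care are the bookkeeping that makes the correspondence a genuine bijection, namely verifying that $z=\alpha^i$ and $1-z=\beta^j$ land in the admissible ranges (equivalently, that $z\notin\{0,1\}$), and the remark that parity of an exponent is well defined only because $q-1$ is even. The uniformity of the squares-equal-even-powers fact across the two possibly distinct generators $\alpha$ and $\beta$ is what lets the two parity conditions decouple cleanly into the single requirement that $z$ and $1-z$ both be squares.
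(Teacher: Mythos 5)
Your proof is correct and follows essentially the same route as the paper's: identify each dot $(i,j)$ with the element $z=\alpha^i$ (so that $1-z=\beta^j$), and use the fact that, since $q-1$ is even, the squares in $\mathbb{F}_q^*$ are exactly the even powers of a generator. The paper phrases this as a two-way correspondence (even/even dot $\Rightarrow$ such a $z$, and conversely) rather than as one global bijection restricted to even/even dots, but the substance is identical.
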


\begin{proof} We may suppose that $C$ is defined using generators $\alpha$ and $\beta$
of $\mathbb{F}_q^*$. Suppose that $C$ has a dot at an even/even position
$(i,j)$. Then we may write $i=2u$, $j=2v$ for unique positive integers
$u$ and $v$, and by definition of $C$,
\[
\alpha^{2u}+\beta^{2v}=1.
\]
It follows that if we set $z=\alpha^{2u}$, $z$ is a square and $1-z=\beta^{2v}$
is also a square. Thus an even/even dot in $C$ determines
an element $z$ with the stated property.

Conversely, suppose that $w$ is a non-identity element in $\mathbb{F}_q^*$ such that
$w$ and $1-w$ are both squares. Then we may write $w=\alpha^{2s}$ and $1-w=\beta^{2t}$
for unique positive integers $s$ and $t$, with $1<2s, 2t<q-2$. Clearly,
\[
\alpha^{2s}+\beta^{2t}=1,
\]
and thus $C$ has an even/even dot in position $(2s,2t)$.
\end{proof}

\begin{corollary}\label{eqpop} Let $q$ be a power of an odd prime.
Then all
Golomb-Costas arrays of order $q-2$ have equal numbers
of even/even dots, and likewise equal numbers of odd/odd dots, even/odd dots and odd/even dots.
\end{corollary}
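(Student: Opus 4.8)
The plan is to reduce everything to the single observation, already supplied by Lemma~\ref{eesq}, that the even/even count of a Golomb-Costas array of order $q-2$ is an intrinsic invariant of the field $\mathbb{F}_q$. Lemma~\ref{eesq} expresses $\#(e,e)$ as the number of non-identity $z\in\mathbb{F}_q^*$ for which both $z$ and $1-z$ are squares, and this quantity makes no reference whatsoever to the particular generators $\alpha$ and $\beta$ used to build the array. Hence all Golomb-Costas arrays of order $q-2$, however their defining generators are chosen, share the same value of $\#(e,e)$. This disposes of the even/even claim at once.

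For the remaining three counts I would invoke Lemma~\ref{parpop} together with the fact that $n=q-2$ is odd (since $q$ is a power of an odd prime). The odd case of Lemma~\ref{parpop} gives $\#(o,o)=\#(e,e)+1$, so the odd/odd count is determined by the even/even count and is therefore also generator-independent. Finally, Lemma~\ref{parpop} gives $\#(e,o)=\#(o,e)$, and since the four counts sum to the total number of dots, namely $q-2$, I would solve
\[
\#(e,e)+\#(o,o)+\#(e,o)+\#(o,e)=q-2
\]
to obtain $\#(e,o)=\#(o,e)=\tfrac{q-3}{2}-\#(e,e)$. As the right-hand side depends only on $\#(e,e)$, the even/odd and odd/even counts are likewise the same for every Golomb-Costas array of order $q-2$.

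The argument carries no genuine obstacle: its entire force resides in the generator-free description of $\#(e,e)$ provided by Lemma~\ref{eesq}, after which Lemma~\ref{parpop} and a single counting identity propagate that invariance to the other three parity classes. The only points requiring care are verifying that $q-2$ is odd, so that the correct branch of Lemma~\ref{parpop} applies, and confirming that the four parity counts genuinely exhaust the $q-2$ dots of the array.
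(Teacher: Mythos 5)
Your proposal is correct and follows essentially the route the paper intends: Corollary~\ref{eqpop} is stated as an immediate consequence of Lemma~\ref{eesq}, whose generator-free description of $\#(e,e)$ is the entire substance of the matter, with Lemma~\ref{parpop} (using that $q-2$ is odd) and the total dot count $q-2$ propagating the invariance to the other three parity classes exactly as you do. Your explicit formula $\#(e,o)=\#(o,e)=\tfrac{q-3}{2}-\#(e,e)$ is consistent with the values the paper later derives in Theorem~\ref{gpopth}.
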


We note that there are examples of Costas arrays of order $q-2$, where $q$ is a power of a prime,
which do not exhibit the same regularity with respect to even/even dots, etc. These arrays, of course, are not constructed by field-theoretic means.

To  enumerate the elements with the property described in Lemma \ref{eesq}, let $\mathbb{F}_q^\dagger$ denote the subset of $\mathbb{F}_q$ obtained by deleting 0 and 1. Let $S$ denote the subset of squares in $\mathbb{F}_q^\dagger$ and $NS$ the subset of non-squares. It is well known that $|NS|=(q-1)/2$ and thus $|S|=(q-3)/2$.

We now partition $\mathbb{F}_q^\dagger$ into four subsets $A$, $B$, $C$ and $D$ defined by
\medskip

\( {\setlength\extrarowheight{3pt}\begin{array}{lll}
A&=&\{\, z\in \mathbb{F}_q^\dagger: z\in S \mbox{ and }1-z\in S\,\}\\
B&=&\{\, z\in \mathbb{F}_q^\dagger: z\in NS \mbox{ and }1-z\in NS\,\}\\
C&=&\{\, z\in \mathbb{F}_q^\dagger: z\in S \mbox{ and }1-z\in NS\,\}\\
D&=&\{\, z\in \mathbb{F}_q^\dagger: z\in NS \mbox{ and }1-z\in S\,\}.
\end{array}} \)

\medskip
Next, we introduce two permutations $\sigma$ and $\tau$ of  $\mathbb{F}_q^\dagger$ defined by
\[
\sigma(z)=1-z,\quad \tau(z)=z^{-1}.
\]
Provided $q\ge 5$, the two permutations generate a subgroup $G$ of the group of permutations of $\mathbb{F}_q^\dagger$, isomorphic to the symmetric group $S_3$. $G$ permutes the subsets $A$, $B$, $C$ and $D$ introduced above, and this fact enables us to calculate the size of each subset.

\begin{lemma}\label{gpop} With the notation introduced above, the following hold.

\item{(a)} If $q\equiv 1\pmod {4}$, then
\[
\sigma(A)=\tau(A)=A,\quad \sigma(C)=D,\quad \tau(B)=D.
\]
Thus $|B|=|C|=|D|$ in this case.
\medskip
\item{(b)} If $q\equiv 3\pmod {4}$, then
\[
\tau(A)=C,\quad \sigma(C)=D,\quad \sigma(B)=\tau(B)=B.
\]
Thus $|A|=|C|=|D|$ in this case.
\end{lemma}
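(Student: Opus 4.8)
The plan is to exploit the fact that $\sigma$ and $\tau$ generate a group $G \cong S_3$ acting on $\mathbb{F}_q^\dagger$, and to track how this action interacts with the quadratic character. The crucial number-theoretic input is the quadratic character of $-1$: the element $-1$ is a square in $\mathbb{F}_q^*$ if and only if $q \equiv 1 \pmod 4$. Since $\sigma(z) = 1 - z$ and $\tau(z) = z^{-1}$, I must understand how each map changes membership in $S$ versus $NS$, and this is where the parity of $(q-1)/2$ enters.

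Let me reason first about $\tau$. Since $z^{-1} = z \cdot (z^{-1})^2$, the elements $z$ and $z^{-1}$ always have the same quadratic character — their ratio $z^2 \cdot z^{-2}\cdot\ldots$, more simply $z \cdot z^{-1} = 1$ forces $z^{-1} = z \cdot (z^2)^{-1}$... let me just say $\chi(z^{-1}) = \chi(z)^{-1} = \chi(z)$ since $\chi$ takes values $\pm 1$. So $\tau$ preserves the square/nonsquare status of $z$. For the second coordinate I use $1 - z^{-1} = (z-1)/z = -(1-z)z^{-1}$, so $\chi(1 - z^{-1}) = \chi(-1)\,\chi(1-z)\,\chi(z)$. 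This is the key identity: the character of $1 - \tau(z)$ picks up a factor $\chi(-1)\chi(z)$ relative to $\chi(1-z)$.

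Now I would split by cases. When $q \equiv 1 \pmod 4$ we have $\chi(-1) = 1$, so from the identity above, $\tau$ fixes the first-coordinate character and multiplies the second by $\chi(z)$; tracing this through the definitions of $A,B,C,D$ should give $\tau(A) = A$ (on $A$, $\chi(z)=1$, so both coordinates stay squares) and $\tau(B) = D$ (on $B$, $\chi(z) = -1$ flips the second coordinate from nonsquare to square while the first stays a nonsquare). For $\sigma$, note that $\sigma$ simply swaps the roles of the two coordinates since $\sigma(z) = 1-z$ has $1 - \sigma(z) = z$; hence $\sigma$ fixes $A$ and $B$ and interchanges $C$ and $D$. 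Combining $\sigma(C) = D$ with $\tau(B) = D$ and $\sigma(A) = \tau(A) = A$ yields $|B| = |C| = |D|$ as claimed. When $q \equiv 3 \pmod 4$ we have $\chi(-1) = -1$; the same identity now gives $\tau(A) = C$ (the extra sign flips the first coordinate on $A$) and $\sigma(B) = \tau(B) = B$, while $\sigma$ still interchanges $C$ and $D$, producing $|A| = |C| = |D|$.

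The main obstacle I anticipate is bookkeeping rather than depth: I must verify each of the stated orbit relations by carefully substituting into the character identity $\chi(1 - z^{-1}) = \chi(-1)\chi(z)\chi(1-z)$ and confirming that the permutation $\sigma$ genuinely swaps the two character-coordinates. One subtlety to check is that $\sigma$ and $\tau$ actually map $\mathbb{F}_q^\dagger$ to itself — that deleting $0$ and $1$ is preserved — which holds precisely because $\sigma$ fixes no point of $\{0,1\}$ inside the set and $\tau$ sends $\mathbb{F}_q^\dagger$ bijectively to itself for $q \geq 5$; this is why the hypothesis $q \geq 5$ is needed for $G \cong S_3$. Once the coordinate-character identity is in hand, each claimed equality $\sigma(X) = Y$ or $\tau(X) = Y$ follows by a direct check on the defining conditions, and the equalities of cardinalities are then immediate since $\sigma$ and $\tau$ are bijections.
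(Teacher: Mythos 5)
Your proof is correct and takes essentially the same approach as the paper, whose (omitted) proof rests on exactly the two facts your character identity $\chi(1-z^{-1})=\chi(-1)\chi(z)\chi(1-z)$ encodes: that $-1$ is a square if and only if $q\equiv 1\pmod 4$, and the multiplicativity of the quadratic character. One trivial slip: in case (b) the extra sign $\chi(-1)=-1$ flips the \emph{second} coordinate (that of $1-z$) on $A$, not the first, but your stated conclusion $\tau(A)=C$ is nevertheless exactly right.
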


\begin{proof} We omit the details of the proof, but note that the key facts are that $-1$ is in $S$
if and only if $q\equiv 1\pmod{4}$ and the product of two elements of $NS$ is in $S$.
\end{proof}

Since
\[
|A|+|C|=|S|=\frac{q-3}{2},\quad |B|+|D|=|NS|=\frac{q-1}{2},
\]
Lemma \ref{gpop} enables us to find the size of each of the four subsets. We summarize the details below.

\[
|A|=(q-5)/4,\quad |B|=|C|=|D|=(q-1)/4
\]
when $q\equiv 1\pmod{4}$, and
\[
|A|=|C|=|D|=(q-3)/4,\quad |B|=(q+1)/4
\]
when $q\equiv 3\pmod{4}$.

Given Lemma \ref{eesq}, we readily deduce the
following information about the number of dots with a given parity in a Golomb-Costas array.

\begin{theorem}\label{gpopth} Let $C$ be a Golomb-Costas array of order $q-2$, where $q$ is a power of an odd prime.  Suppose that the number of
even/even, odd/odd, even/odd and odd/even dots in $C$ are $\#(e,e)$, $\#(o,o)$, $\#(e,o)$ and $\#(o,e)$, respectively. Then we have
\[
\#(e,e)=(q-5)/4,\quad \#(o,o)=\#(e,o)=\#(o,e)=(q-1)/4
\]
when $q\equiv 1\pmod{4}$, and
\[
\#(e,e)=\#(e,o)=\#(o,e)=(q-3)/4,\quad \#(o,o)=(q+1)/4
\]
when $q\equiv 3\pmod{4}$.
\end{theorem}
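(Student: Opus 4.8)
The plan is to derive Theorem \ref{gpopth} as a direct translation of the subset-size computations that immediately precede its statement. By Lemma \ref{eesq}, the number of even/even dots in a Golomb-Costas array of order $q-2$ equals the number of non-identity elements $z\in\mathbb{F}_q^*$ for which both $z$ and $1-z$ are squares. But this is precisely the cardinality $|A|$ of the subset defined above (note that the non-identity condition $z\neq 1$ together with $z\neq 0$ is exactly the passage to $\mathbb{F}_q^\dagger$, and the requirement that both $z$ and $1-z$ be squares is the defining condition of $A$). Hence $\#(e,e)=|A|$, and the two displayed values of $|A|$ give $\#(e,e)=(q-5)/4$ when $q\equiv 1\pmod 4$ and $\#(e,e)=(q-3)/4$ when $q\equiv 3\pmod 4$.

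To obtain the remaining three quantities, I would invoke Lemma \ref{parpop}. Since $q$ is a power of an odd prime, $q$ is odd, so the order $n=q-2$ of the array is also odd; Lemma \ref{parpop} therefore gives $\#(e,o)=\#(o,e)$ and $\#(o,o)=\#(e,e)+1$. The last relation immediately yields $\#(o,o)=(q-1)/4$ in the case $q\equiv 1\pmod 4$ and $\#(o,o)=(q+1)/4$ in the case $q\equiv 3\pmod 4$, matching the claimed values. It only remains to pin down the common value of $\#(e,o)$ and $\#(o,e)$.

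For the off-diagonal parity counts, I would use the counting identity established in the proof of Lemma \ref{parpop}, namely that the number of rows with an even index equals $\#(e,e)+\#(e,o)$. The number of even integers in $\{1,2,\dots,q-2\}$ is $(q-3)/2$ when $q$ is odd. Thus $\#(e,o)=(q-3)/2-\#(e,e)$. Substituting $\#(e,e)=(q-5)/4$ gives $\#(e,o)=(q-1)/4$ when $q\equiv 1\pmod 4$, and substituting $\#(e,e)=(q-3)/4$ gives $\#(e,o)=(q-3)/4$ when $q\equiv 3\pmod 4$; by $\#(e,o)=\#(o,e)$ the same holds for $\#(o,e)$. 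This completes both cases and reproduces every entry of the theorem.

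The proof is essentially bookkeeping, so I do not anticipate a genuine obstacle; the only point requiring care is the bridge between Lemma \ref{eesq} and the set $A$. One must verify that the ``non-identity element $z$ such that $z$ and $1-z$ are squares'' of Lemma \ref{eesq} corresponds exactly to membership in $A$ rather than to some off-by-one discrepancy: the element $z=0$ is excluded because $\alpha^{2u}\neq 0$, the element $z=1$ is excluded by the non-identity hypothesis, and $z$ being a square while $1-z$ is a square is verbatim the definition of $A$. Once this identification is granted, the rest follows mechanically from the arithmetic of $(q-5)/4$, $(q-3)/4$, $(q-1)/4$, and $(q+1)/4$ together with the parity relations of Lemma \ref{parpop}.
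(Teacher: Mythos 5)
Your proof is correct, and the arithmetic checks out in both congruence cases, but it takes a somewhat different route from the paper's for three of the four counts. The paper (implicitly, since its proof is only the remark ``Given Lemma \ref{eesq}, we readily deduce\dots'') identifies each parity count with one of the four subsets: $\#(e,e)=|A|$, $\#(o,o)=|B|$, $\#(e,o)=|C|$, $\#(o,e)=|D|$, the analogues of Lemma \ref{eesq} for the other parities following by the same argument with odd exponents, and then reads all four values off from the subset sizes computed via the group action of Lemma \ref{gpop}. You instead use the field theory only once, for the single identification $\#(e,e)=|A|$, and recover the remaining three counts from the purely combinatorial Lemma \ref{parpop} (noting $n=q-2$ is odd, so $\#(o,o)=\#(e,e)+1$ and $\#(e,o)=\#(o,e)$) together with the row-counting identity $\#(e,e)+\#(e,o)=(q-3)/2$ from that lemma's proof. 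Your route is more economical and makes transparent that, once $\#(e,e)$ is fixed, the other three counts are forced for \emph{any} permutation array of order $q-2$; the paper's route gives every count a direct field-theoretic interpretation, which is what its later generalization to $r$/$s$ positions (where no analogue of Lemma \ref{parpop} is available to bootstrap from a single count) actually exploits.
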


Theorem \ref{gpopth} shows that, in a Golomb-Costas array, the dots are as uniformly distributed into the four
positions as it is possible to achieve. Again, Costas arrays which are not constructed by such
algebraic methods need not show such uniformity.

It is of interest to enquire whether any such regularity can be detected in Golomb-Costas arrays of \textit{even} order $q-2$ (in other words, in those defined using a finite field of order a power of 2).
It is straightforward to show that when $q$ is a power of 2, the difference $\#(e,e)-\#(o,e)$ is an odd integer and examination of all the Golomb-Costas arrays of order $2^9-2$ and $2^{10}-2$, respectively,
reveals that this integer takes all odd values lying between 35 and $-35$ in the first case, and all odd values lying between 79 and $-79$ in the second. We have not found
any theoretical explanation of this numerical data.

Next, we consider the number of  even/even  and odd/even dots in a Welch-Costas array
of order $p-1$, where $p$ is an odd prime.
Here, the results turn out to be much more interesting when $p\equiv 3\mod{4}$. Recall that
given a generator $\alpha$ and arbitrary element $c$ in $\mathbb{F}_q^*$, we have defined
a Welch-Costas array of order $p-1$ in which
an even/even dot $(2i,2j)$ corresponds to the equation
$$
c\alpha^{2i}\equiv 2j\bmod p.
$$
This means that if $c$ is a square, the integer $2j$ is an even {\it quadratic residue\/} mod $p$, whereas if $c$ is a non-square, $2j$ is an even {\it non-quadratic residue\/} mod $p$.
Similarly, an odd/even
position $(2i+1,2j)$ corresponds to the equation
\[
c\alpha^{2i+1}\equiv 2j\pmod {p},
\]
which implies that $2j$ is an even non-quadratic residue mod $p$ if $c$ is a square, and an
even quadratic residue mod $p$ if $c$ is a non-square. It follows that, for a Welch-Costas
array, we have
\[
\#(e,e)-\#(o,e)=\pm(E-F),
\]
where $E$ is the number of even elements in $\Omega$ which are quadratic residues mod $p$ and $F$ is the number of even elements in $\Omega$ which are
non-quadratic residues modulo $p$.

\begin{lemma} \label{resnum} Suppose that $p\equiv 1\pmod{4}$. Then the number of even quadratic residues mod $p$ equals the number of even
non-quadratic residues modulo $p$.
\end{lemma}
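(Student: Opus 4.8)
The plan is to exploit the involution $\phi\colon x\mapsto p-x$ on $\Omega=\{1,2,\ldots,p-1\}$, which simultaneously reverses parity and, crucially, preserves quadratic character when $p\equiv 1\pmod 4$. The single arithmetic fact I would invoke is that $-1$ is a quadratic residue modulo $p$ precisely when $p\equiv 1\pmod 4$; this is the same fact already used in the proof of Lemma~\ref{gpop}.

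First I would record the two relevant properties of $\phi$. Since $p$ is odd, $p-x$ has the opposite parity to $x$, so $\phi$ interchanges the even and odd elements of $\Omega$. Since $p-x\equiv -x\pmod p$ and $-1$ is a quadratic residue, the element $p-x$ is a quadratic residue if and only if $x$ is; thus $\phi$ carries quadratic residues to quadratic residues and non-residues to non-residues. Writing $E_Q, O_Q$ for the number of even, respectively odd, quadratic residues in $\Omega$, and $E_N, O_N$ for the corresponding counts of non-residues, the involution $\phi$ therefore furnishes bijections between the even and odd residues and between the even and odd non-residues, giving $E_Q=O_Q$ and $E_N=O_N$.

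Second I would combine this with the total counts. There are exactly $(p-1)/2$ quadratic residues and $(p-1)/2$ non-residues in $\Omega$, so $E_Q+O_Q=E_N+O_N=(p-1)/2$. Together with $E_Q=O_Q$ and $E_N=O_N$ this forces $E_Q=E_N=(p-1)/4$ --- note that $(p-1)/2$ is even precisely because $p\equiv1\pmod4$, so this division is legitimate. In particular the number of even quadratic residues equals the number of even non-residues, as claimed.

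There is no serious obstacle here: the entire argument rests on the interplay between the two effects of $\phi$. The only point deserving care is that \emph{both} hypotheses are needed in tandem --- parity reversal requires $p$ to be odd, while preservation of quadratic character requires $p\equiv1\pmod4$ through the fact that $-1$ is a residue. Were $-1$ a non-residue (the case $p\equiv3\pmod4$), the map $\phi$ would swap residues and non-residues and the conclusion would fail; this is exactly why the companion analysis for $p\equiv3\pmod4$ is genuinely different and ultimately leads to class numbers.
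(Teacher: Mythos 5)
Your proof is correct, and while it opens exactly as the paper does, it closes by a genuinely different and more elementary route. Both arguments begin with the same key fact: since $-1$ is a quadratic residue when $p\equiv 1\pmod{4}$, the involution $x\mapsto p-x$ on $\Omega$ reverses parity while preserving quadratic character, so the even and odd quadratic residues are equinumerous. The paper then finishes indirectly through Costas-array machinery: it takes the specific Welch-Costas array defined by $\sigma(i)=j$ where $\alpha^i\equiv j\pmod{p}$, identifies $\#(o,e)$ with the number of even non-residues and $\#(e,o)$ with the number of odd residues, and invokes the permutation-array identity $\#(e,o)=\#(o,e)$ of Lemma \ref{parpop} to conclude that the even non-residues are equinumerous with the even (equivalently odd) residues. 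You instead apply the involution a second time, to the non-residues, obtaining $E_N=O_N$ as well, and then finish by pure counting from $E_Q+O_Q=E_N+O_N=(p-1)/2$. Your route is self-contained --- it needs neither the Welch construction nor Lemma \ref{parpop} --- and it yields the sharper explicit value $E_Q=E_N=(p-1)/4$, which is precisely the quantity that feeds into Theorem \ref{wpopth1mod4}. What the paper's detour buys is economy (only the residue half of the involution argument is checked) and thematic consistency, since the lemma is only ever applied to Welch-Costas arrays; as a standalone number-theoretic statement, however, your proof is the more natural one. One small remark: your aside that the division by $2$ is ``legitimate'' because $(p-1)/2$ is even is not actually needed for the equality $E_Q=E_N$; the relations $2E_Q=(p-1)/2=2E_N$ give it directly, and integrality of $E_Q$ then forces $(p-1)/2$ to be even as a consequence rather than a prerequisite.
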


\begin{proof} We use the fact that, under the given hypothesis, $-1$ is a square mod $p$.
Let $2j$ be an even integer in $\Omega$ which is a square modulo $p$. Then $-2j$ is also a
square mod $p$ and hence $p-2j$ is an odd quadratic residue in $\Omega$. This association shows that there are equal numbers of odd and even quadratic residues in $\Omega$. Now let
$\alpha$ be a
generator of  $\mathbb{F}_p^*$ and consider
the Welch-Costas array of order $p-1$ defined by the permutation $\sigma$ of $\Omega$
where $\sigma(i)=\alpha^i\equiv j\pmod {p}$. We know that $\#(o,e)$ equals the number of even elements in
$\Omega$ which are non-squares, and it is straightforward to see that $\#(e,o)$ equals the number
of odd elements in $\Omega$ which are squares. Since  $\#(o,e)=\#(e,o)$, it follows
from our earlier deduction that the number of even elements in $\Omega$ which are squares mod $p$
equals the number of even elements in $\Omega$ which are non-squares mod $p$, as claimed.
\end{proof}
We now deduce the following analogue of Theorem \ref{gpopth} for Welch-Costas arrays of order $p-1$ when
$p\equiv 1\pmod{4}$. Note that we have complete uniformity in the distribution of dots.

\begin{theorem} \label{wpopth1mod4}  Given a Welch-Costas array of order $p-1$, where $p$ is a prime with
$p\equiv 1\pmod{4}$,  we have
\[
\#(e,e)=\#(o,o)=\#(e,o)=\#(o,e)=(p-1)/4.
\]
\end{theorem}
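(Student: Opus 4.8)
The plan is to assemble three linear relations among the four quantities $\#(e,e)$, $\#(o,o)$, $\#(e,o)$, $\#(o,e)$ and combine them with the elementary fact that these four counts partition the $p-1$ dots of the array. First I would invoke Lemma~\ref{parpop}: since the order $n=p-1$ is even (as $p$ is odd), that lemma supplies both $\#(e,o)=\#(o,e)$ and $\#(e,e)=\#(o,o)$ simultaneously. This already collapses the four unknowns to two, namely the common value of $\#(e,e)$ and $\#(o,o)$, and the common value of $\#(e,o)$ and $\#(o,e)$.

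Next I would use the identity derived just before Lemma~\ref{resnum}, namely $\#(e,e)-\#(o,e)=\pm(E-F)$, where $E$ and $F$ count the even quadratic residues and even non-quadratic residues in $\Omega$, respectively. The hypothesis $p\equiv 1\pmod 4$ enters precisely through Lemma~\ref{resnum}, which asserts that $E=F$. Substituting this yields $\#(e,e)=\#(o,e)$, the third relation I need. Taken together with the two relations from Lemma~\ref{parpop}, this forces all four quantities to coincide.

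Finally, since every dot of the permutation array lies in exactly one of the four parity classes, the four counts sum to the total number of dots, which is $p-1$. With all four equal, each must therefore equal $(p-1)/4$; the hypothesis $p\equiv 1\pmod 4$ conveniently guarantees that this is an integer, furnishing a reassuring consistency check.

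The genuine work has already been carried out in Lemma~\ref{resnum} and in the residue bookkeeping preceding it; once those are in hand, the present theorem reduces to simple linear algebra over the four counts, so I do not expect any serious obstacle here. The only point to guard against is the sign ambiguity in $\#(e,e)-\#(o,e)=\pm(E-F)$, but because Lemma~\ref{resnum} makes the right-hand side vanish irrespective of the sign, this ambiguity is harmless and the argument goes through cleanly.
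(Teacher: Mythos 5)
Your proposal is correct and follows essentially the same route the paper takes: the theorem is deduced from the identity $\#(e,e)-\#(o,e)=\pm(E-F)$ together with Lemma~\ref{resnum} (giving $E=F$ when $p\equiv 1\pmod{4}$), combined with Lemma~\ref{parpop} for even order and the fact that the four counts sum to $p-1$. The paper leaves this deduction implicit, and your write-up simply makes it explicit.
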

The main result of this paper is that, for primes $p$ satisfying $p\equiv 3\pmod{4}$,
the situation is much more complicated than that described in Theorems \ref{gpopth} and \ref{wpopth1mod4}. We first prove a subsidiary lemma.

\begin{lemma}\label{sublem}  Let $p$ be an odd prime and let $V$, $N$ be the number of integers in the open interval $(0,p/2)$ which are squares, non-squares, respectively,
modulo $p$. Then if $p\equiv 7\pmod{8}$, $V=E$ and $N=F$, where $E$ and $F$ are described
before the statement of  Lemma \ref{resnum}. If $p\equiv 3\pmod{8}$, $V=F$ and $N=E$.
\end{lemma}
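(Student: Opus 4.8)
The plan is to translate everything into a single count governed by the Legendre symbol, by setting up an explicit bijection between the even elements of $\Omega$ and the integers of the interval $(0,p/2)$. First I would observe that, since $p$ is odd, the even elements of $\Omega=\{1,2,\ldots,p-1\}$ are exactly the integers $2k$ with $1\le k\le (p-1)/2$. As $k$ runs over this range, $2k$ runs over $\{2,4,\ldots,p-1\}$ without repetition (note $2k\le p-1<p$ throughout, so no reduction mod $p$ occurs), while $k$ itself runs over precisely the integers in the open interval $(0,p/2)$. Thus $k\mapsto 2k$ is a bijection from the integers in $(0,p/2)$ onto the even elements of $\Omega$, and this bijection is what will let me compare the counts $V,N$ with $E,F$.

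Next comes the only substantive computation. By multiplicativity of the Legendre symbol, $2k$ is a square modulo $p$ if and only if $\left(\frac{2}{p}\right)\left(\frac{k}{p}\right)=1$; that is, whether the even number $2k$ is a square is determined by the fixed sign $\left(\frac{2}{p}\right)$ together with the quadratic character of $k$. Hence the behaviour of the bijection on squares and non-squares is entirely controlled by the value of $\left(\frac{2}{p}\right)$, and I would evaluate this using the second supplementary law of quadratic reciprocity, namely that $\left(\frac{2}{p}\right)=1$ exactly when $p\equiv\pm 1\pmod 8$. Since the lemma concerns the case $p\equiv 3\pmod 4$, the only residues that arise are $3$ and $7$ modulo $8$, so $\left(\frac{2}{p}\right)=1$ precisely when $p\equiv 7\pmod 8$ and $\left(\frac{2}{p}\right)=-1$ precisely when $p\equiv 3\pmod 8$.

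Finally I would assemble the conclusion. When $p\equiv 7\pmod 8$ we have $\left(\frac{2}{p}\right)=1$, so $2k$ is a square if and only if $k$ is a square; under the bijection this matches the even squares of $\Omega$ with the squares in $(0,p/2)$, giving $E=V$ and, by complementation, $F=N$. When $p\equiv 3\pmod 8$ we have $\left(\frac{2}{p}\right)=-1$, so $2k$ is a square if and only if $k$ is a non-square; the bijection now matches even squares with non-squares in $(0,p/2)$ and even non-squares with squares, yielding $E=N$ and $F=V$, exactly as stated.

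There is no genuine obstacle here: once the bijection $k\mapsto 2k$ is in place, the entire argument collapses to the evaluation of $\left(\frac{2}{p}\right)$ via the second supplement. The only point requiring a moment's care is the remark that $2k$ is never reduced modulo $p$ on the given range, so that the counting is preserved as an identity of genuine integers rather than merely of residue classes; this is what guarantees that $V,N$ (counted as integers in $(0,p/2)$) correspond cleanly to $E,F$ (counted as even residues in $\Omega$).
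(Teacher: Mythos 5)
Your proof is correct and follows essentially the same route as the paper: both use the doubling bijection $k\mapsto 2k$ between the integers in $(0,p/2)$ and the even elements of $\Omega$, together with the fact that $2$ is a quadratic residue mod $p$ exactly when $p\equiv 7\pmod 8$ (and a non-residue when $p\equiv 3\pmod 8$). Your phrasing via the Legendre symbol and the second supplementary law just makes explicit what the paper states informally as the ``doubling process.''
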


\begin{proof} Suppose first that $p\equiv 7\pmod{8}$. Then 2 is a square mod $p$. It follows
that if $x$ is an integer in the open interval $(0,p/2)$ which is a square mod $p$, $2x$ is an even
integer in $\Omega$ which is a square mod $p$. Furthermore, any even integer in $\Omega$ which is a square mod $p$ clearly arises by this doubling process. This proves that $V=E$, and the proof that
$N=F$ is identical. On the other hand, if $p\equiv 3\pmod{8}$, 2 is a non-square mod $p$, and then the doubling process maps the integers in the open interval $(0,p/2)$ which are squares mod $p$ onto
the even integers in $\Omega$ which are non-squares mod $p$. This proves that $V=F$ in this case,
and likewise, $N=E$.
\end{proof}

To state the next result, we need to recall the (complicated) concept of the \emph{class number} of an algebraic number field:

\begin{definition}
The class number of an algebraic number field equals the number of ideal classes in the underlying ring of algebraic integers. Equivalently, it is the order of the ideal class group of the number field.
\end{definition}

For example, the class number is 1 if and only if the ring of integers admits unique factorization into prime elements.

We now apply Dirichlet's class number formula (\cite{BS}, Theorem 4, p.346)  to deduce the following result.

\begin{theorem} \label{dir} Let $p$ be a prime and let
$h(-p)$ denote the class number of the imaginary
quadratic field $\mathbb{Q}(\sqrt{-p})$. In a Welch-Costas array of order
$p-1$, the following hold.

\item{(a)} If $p\equiv 7\pmod {8}$, then
\[
\#(e,e)-\#(o,e)=\pm h(-p).
\]

\item{(b)}
If $p\equiv 3\pmod {8}$, then
\[
\#(e,e)-\#(o,e)=\pm 3h(-p).
\]
\end{theorem}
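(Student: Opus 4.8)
The plan is to reduce the quantity $\#(e,e)-\#(o,e)$ to a difference of counts of quadratic residues, and then to identify that difference with a multiple of $h(-p)$ through Dirichlet's class number formula. First I would collect the two reductions already at hand. The discussion preceding Lemma \ref{resnum} gives $\#(e,e)-\#(o,e)=\pm(E-F)$, where $E$ and $F$ count the even quadratic residues and even non-residues in $\Omega$, and Lemma \ref{sublem} identifies $E,F$ with the counts $V,N$ of squares and non-squares in the interval $(0,p/2)$: for $p\equiv 7\pmod 8$ one has $E-F=V-N$, while for $p\equiv 3\pmod 8$ one has $E-F=N-V$. In either case, therefore, $\#(e,e)-\#(o,e)=\pm(V-N)$, and the whole problem is reduced to expressing $V-N$ in terms of the class number.

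For this last step I would invoke Dirichlet's formula in the form cited as \cite{BS}. Since $p\equiv 3\pmod 4$, the field $\mathbb{Q}(\sqrt{-p})$ has discriminant $-p$ and (for $p>3$, so that the only units are $\pm 1$) the formula reads
\[
h(-p)=-\frac1p\sum_{a=1}^{p-1}\left(\frac ap\right)a,
\]
where $\left(\frac{\cdot}{p}\right)$ is the Legendre symbol. Writing $S$ for this weighted character sum and $T=V-N=\sum_{a=1}^{(p-1)/2}\left(\frac ap\right)$ for the unweighted half-sum, the task is to pass from $S$ to $T$.

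The key step, which I expect to be the main obstacle, is precisely this passage from the weighted sum $S$ to the counting difference $T$, and in particular obtaining the correct denominator $2-\left(\frac2p\right)$. I would derive two linear relations between $S$, $T$ and the weighted half-sum $W=\sum_{a=1}^{(p-1)/2}\left(\frac ap\right)a$. The pairing $a\leftrightarrow p-a$, together with $\left(\frac{-1}{p}\right)=-1$ (valid as $p\equiv 3\pmod 4$), gives $S=2W-pT$. A second relation comes from the bijection $a\mapsto 2a\bmod p$ of $\{1,\dots,p-1\}$: using $\left(\frac{2a}{p}\right)=\left(\frac2p\right)\left(\frac ap\right)$ and splitting according to whether $2a<p$ or $2a>p$ (so that the reduction modulo $p$ subtracts $p$ in the second case), one obtains $S=\left(\frac2p\right)(4W-pT)$. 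Eliminating $W$ between the two relations then yields
\[
h(-p)=\frac{V-N}{\,2-\left(\frac2p\right)\,}.
\]

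Finally I would read off the two cases. When $p\equiv 7\pmod 8$ we have $\left(\frac2p\right)=1$, so $V-N=h(-p)$; when $p\equiv 3\pmod 8$ we have $\left(\frac2p\right)=-1$, so $V-N=3h(-p)$. Combining with the reduction $\#(e,e)-\#(o,e)=\pm(V-N)$ from the first paragraph gives $\#(e,e)-\#(o,e)=\pm h(-p)$ and $\pm 3h(-p)$ respectively, which are the two assertions of the theorem. The degenerate case $p=3$ (an array of order $2$) is excluded, since there the unit group of $\mathbb{Q}(\sqrt{-3})$ is larger and the displayed formula for $h(-p)$ must be modified.
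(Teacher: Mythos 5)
Your proposal is correct and takes essentially the same approach as the paper: reduce $\#(e,e)-\#(o,e)$ to $\pm(V-N)$ via the discussion preceding Lemma \ref{resnum} together with Lemma \ref{sublem}, then invoke Dirichlet's class number formula. The only difference is that you derive the half-interval counting form $h(-p)=\frac{V-N}{2-\left(\frac{2}{p}\right)}$ from the weighted-sum form by elementary character-sum manipulations (and you correctly flag the $p=3$ exception, which the paper's statement overlooks), whereas the paper obtains that form directly from the cited reference (Borevich--Shafarevich, Theorem 4, p.~346).
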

The class number $h(-p)$ has been much studied and has been the subject of many deep results. By a result of Oesterl\'e, $h(-p)\ge 1/55 \log p$, \cite{IR}, p.361. It is known that the largest primes $p\equiv 3 \pmod{4}$ for which $h(-p)=1$ and 3, respectively, are 163 and 907, and that there is no prime $p\equiv 3 \pmod{4}$ with $h(-p)=2$. However, the exact value of $h(-p)$ seems to be very unpredictable in general. 

\section{Acknowledgements}

The authors would like to thank the anonymous referees for their valuable comments and corrections. They are also grateful to John Murray (NUI, Maynooth) for providing the argument for the proof of Lemma \ref{parpop}.

\end{document}